\newtheorem{theorem}{Theorem}[section]
\newtheorem{corollary}[theorem]{Corollary}
\theoremstyle{definition}
\newtheorem{definition}{Definition}
\theoremstyle{remark}
\newtheorem{remark}[theorem]{Remark}
\newtheorem{conjecture}[theorem]{Conjecture}
\numberwithin{equation}{section}
\def\phi{\varphi}
\def\A{\mathbb A}
\def\N{\mathbb N}
\def\kx{k[X_0,\ldots,X_n]}
\def\fp{{\mathcal P}}
\def\mm{{\bf m}}
\newcommand{\rar}{\rightarrow}
\newcommand{\lar}{\longrightarrow}
\begin{document}

\title{Minimal free resolutions for certain affine monomial curves}

\author{Philippe Gimenez}
\address{Department of Algebra, Geometry and
Topology, Faculty of Sciences, University of Valladolid, 47005
Valladolid, Spain.} \email{pgimenez@agt.uva.es}
\thanks{The first author was partially supported by
MTM2007-61444, {\it Ministerio de Educaci\'on y Ciencia}, Spain.}

\author{Indranath Sengupta}
\address{School of Mathematical Sciences, RKM
Vivekananda University, Belur, India.}
\curraddr{Department of Mathematics, Jadavpur University, Kolkata,
WB 700 032, India.} \email{sengupta.indranath@gmail.com}
\thanks{The second author thanks DST, Government of India for financial support for the
project ``Computational Commutative Algebra", reference no.
SR/S4/MS: 614/09, and for the BOYSCAST 2003 Fellowship. This
collaboration is the outcome of the second author's visits to the
University of Missouri, Columbia, USA during Fall 2007 and to the
University of Valladolid, SPAIN for two months in 2009 with the
research fellowship {\it Ayuda para la Estancia de Investigadores de
Otras Instituciones}. He thanks both the institutions for their
support.}

\author{Hema Srinivasan}
\address{Mathematics Department, University of
Missouri, Columbia, MO 65211, USA.}
\email{SrinivasanH@missouri.edu}

\subjclass{Primary 13D02; Secondary 13A02, 13C40.}
\date{}

\dedicatory{This paper is dedicated to Wolmer V. Vasconcelos.}

\keywords{Monomial curves, arithmetic sequences, determinantal
ideals, Betti numbers, minimal free resolutions.}

\begin{abstract}
Given an arbitrary field $k$ and an arithmetic sequence of positive
integers $m_0<\ldots<m_n$, we consider the affine monomial curve in
$\A_k^{n+1}$  parameterized by $X_0=t^{m_0}$, \ldots, $X_n=t^{m_n}$.
In this paper, we conjecture that the Betti numbers of its
coordinate ring are completely determined by $n$ and the value of
$m_0$ modulo $n$. We first show that the defining ideal of the
monomial curve can be written as a sum of two determinantal ideals.
Using this fact, we describe the minimal free resolution of the
coordinate ring in the following three cases: when $m_0\equiv 1\mod
n$ (determinantal), when $m_0\equiv n\mod n$ (almost determinantal),
and when $m_0\equiv 2\mod n$ and $n=4$ (Gorenstein of codimension
$4$).
\end{abstract}

\maketitle

\section*{Introduction}
Let $k$ denote an arbitrary field and $R$ be the polynomial ring
$\kx$. Consider the $k$-algebra homomorphism $\phi:R\rar k[t]$ given
by $\phi(X_i)=t^{m_i}$, $i=0,\ldots,n$. Then the ideal
$\fp:=\ker{\phi}\subset R$ is the defining ideal of the monomial
curve in $\A_k^{n+1}$ given by the parametrization $X_0=t^{m_0}$,
\ldots, $X_n=t^{m_n}$. The $k$-algebra of the semigroup
$\Gamma\subset\N$ generated by $m_0,\ldots,m_n$ is
$k[\Gamma]:=k[t^{m_0},\ldots,t^{m_n}]\simeq R/\fp$, which is
one-dimensional and  $\fp$ is a perfect ideal of codimension $n$. It
is well known that $\fp$ is minimally generated by binomials.
Moreover, $\fp$ is a homogeneous ideal and $k[\Gamma]$ is the
homogeneous coordinate ring if we give weight $m_i$ to the variables
$X_i$. Henceforth, homogeneous and graded would mean homogeneous and
graded with respect to this weighted graduation.

\medskip

Assume now that the positive integers $m_0,\ldots,m_n$ satisfy the
following properties:
\begin{enumerate}
\item[(i)]
$\gcd{(m_0,\ldots,m_n)}=1$;
\item[(ii)]
$0<m_0<\cdots<m_n$ and $m_i=m_0+id$ for every $i\in[1,n]$, i.e., the
integers form an arithmetic progression with common difference $d$;
\item[(iii)]
$m_0,\ldots,m_n$ generate the semigroup
$\Gamma:=\displaystyle{\sum_{0\leq i\leq n}\N m_i}$ minimally, where
$\N=\{0,1,2,\ldots\}$, i.e., $m_j\notin\displaystyle{\sum_{0\leq
i\leq n;\,i\neq j}\N m_i}$ for every $i\in[0,n]$.
\end{enumerate}

\begin{definition}\label{arithmeticsequence}\rm
A sequence of positive intergers
$(\mm ) = m_0,\ldots,m_n$ is called an {\bf arithmetic sequence} if it
satisfies the conditions (i),(ii) and (iii) above.
\end{definition}

Let us write $m_0=an+b$ such that $a,b$ are positive integers and
$b\in[1,n]$. Note that $b\in [1,n]$ and condition (iii) on
$m_0,\ldots,m_n$ ensures that $a\geq 1$; otherwise $m_0=b$ and
$m_b=m_0+bd=(1+d)b$ contradicts minimally condition (iii).

\medskip

Let $(\mm )= m_0,\ldots, m_n$ be an arithmetic sequence. We say that
the monomial curve  in $\A_k^{n+1}$  parameterized by $X_0=t^{m_0}$,
\ldots, $X_n=t^{m_n}$ is the monomial curve associated to the
arithmetic sequence $(\mm )$ and denote it by $C(\mm )$. A minimal
binomial generating set for the defining ideal $\fp$ of $C(\mm )$
was given in \cite{patil}, and it was rewritten in \cite{malooseng}
to prove that $\fp$ is not a complete intersection if $n\geq 3$. An
explicit formula for the type of $k[\Gamma]$ is given in
\cite[Corollary~6.2]{patseng} under a more general assumption of
almost arithmetic sequence on the integers $(\mm )= m_0,\ldots, m_n$
and it follows from this result that if $(\mm )= m_0,\ldots, m_n$ is
an arithmetic sequence then $k[\Gamma]$ is Gorenstein if and only if
$b=2$. In this paper, we prove in Theorem~{\ref{mingen}} that ideal
$\fp$ has the special structure that it can be written as a sum of
two determinantal ideals, one of them being the defining ideal of
the rational normal curve. We exploit this structure to construct an
explicit minimal free resolution of the graded ideal $\fp$, in the
cases when $m_0\equiv 1$ or $n$ modulo $n$, and when $m_0\equiv 2$
modulo $n$ and $n\leq 4$. Note that a minimal free resolution for
$\fp$ had already been constructed for $n=3$ in \cite{seng} using a
Gr\"{o}bner basis for $\fp$. In \cite{eaca} the following question was
posed~: do the total Betti numbers of $\fp$ depend only on the
integer $m_0$ modulo $n$~? In this article, we answer this question
in affirmative for the above cases. This question will be addressed
in general in our work in progress \cite{hip}.

\section{The defining ideal}

By \cite{patil} and \cite {malooseng}, one knows that the number of
elements in a minimal set of generators of the ideal $\fp$ depends
only on $m_0$ modulo $n$. Our first result shows that $\fp$ has an
additional structure that will be helpful in the sequel.

\begin{theorem} {\label{mingen}}
Let $(\mm) = m_0, \ldots m_n$ be an arithmetic sequence and $\fp$ be
the defining ideal of the monomial curve $C(\mm )$ associated to
$\mm$. Then
 {\footnotesize
$$
\fp= I_2 ({\left(\begin{array}{cccc}
\begin{array}{c}
X_{0}\\[1.5mm]
X_{1}\\[1.5mm]
\end{array} &
\begin{array}{c}
X_{1}\\[1.5mm]
X_{2}\\[1.5mm]
\end{array} &
\begin{array}{c}
\cdots\\[1.5mm]
\cdots\\[1.5mm]
\end{array} &
\begin{array}{c}
X_{n-1}\\[1.5mm]
X_{n}\\[1.5mm]
\end{array}
\end{array}\right)})+ I_2 ({\left(\begin{array}{cccc}
\begin{array}{c}
X_{n}^{a}\\[1.5mm]
X_{0}^{a+d}\\[1.5mm]
\end{array} &
\begin{array}{c}
X_{0}\\[1.5mm]
X_{b}\\[1.5mm]
\end{array} &
\begin{array}{c}
\cdots\\[1.5mm]
\cdots\\[1.5mm]
\end{array} &
\begin{array}{c}
X_{n-b}\\[1.5mm]
X_{n}\\[1.5mm]
\end{array}
\end{array}\right)}).
$$
 }
\end{theorem}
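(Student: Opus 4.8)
The plan is to prove the two inclusions separately. Write $J$ for the ideal on the right-hand side, $J=D_1+D_2$, where $D_1=I_2(M_1)$ is the ideal of $2\times2$ minors of the first matrix $M_1$ (so that $D_1$ is the defining ideal of the rational normal curve of degree $n$), and $D_2=I_2(M_2)$ is the ideal of $2\times2$ minors of the second matrix $M_2$.

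The inclusion $J\subseteq\fp$ is the routine one. Since $\fp=\ker\phi$ and every $2\times2$ minor of $M_1$ or of $M_2$ is a binomial $X^\alpha-X^\beta$, it suffices to check that $\sum_i\alpha_im_i=\sum_i\beta_im_i$ in each case. For $M_1$ the minors are $X_iX_{j+1}-X_{i+1}X_j$, and both monomials have weighted degree $2m_0+(i+j+1)d$. The minors of $M_2$ are of two kinds: those formed from two of the last $n-b+1$ columns are $X_iX_{j+b}-X_{i+b}X_j$, with both monomials of weighted degree $2m_0+(i+j+b)d$; those formed from the first column and the $(i+2)$-th column are $X_n^aX_{i+b}-X_0^{a+d}X_i$, and here the two weighted degrees, $am_n+m_{i+b}=(a+1)m_0+(an+i+b)d$ and $(a+d)m_0+m_i=(a+d+1)m_0+id$, coincide precisely because $m_0=an+b$. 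Hence $J\subseteq\fp$.

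For the reverse inclusion $\fp\subseteq J$ I would invoke the explicit minimal binomial generating set of $\fp$ recalled from \cite{patil} (see also \cite{malooseng}): besides the $\binom{n}{2}$ binomials $X_iX_{j+1}-X_{i+1}X_j$ ($0\le i<j\le n-1$), which are exactly the $2\times2$ minors of $M_1$ and so lie in $D_1$, it consists of the $n-b+1$ binomials $\Delta_i:=X_0^{a+d}X_i-X_n^aX_{i+b}$ for $i=0,\ldots,n-b$. But $\Delta_i$ is, up to sign, the $2\times2$ minor of $M_2$ formed from its first column and its $(i+2)$-th column, so $\Delta_i\in D_2$. Consequently every element of a generating set of $\fp$ lies in $J=D_1+D_2$, which gives $\fp\subseteq J$ and, together with the previous paragraph, the desired equality $\fp=J$.

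The one point that requires care is the last step: matching the generating set of \cite{patil}, written there in a slightly different binomial form, with the two explicit lists of minors above, i.e. seeing that the ``extra'' generators there agree, modulo $D_1$, with the $\Delta_i$. This rests on the classical fact that any $2\times2$ minor $X_pX_q-X_rX_s$ with $p\le r\le s\le q$ and $p+q=r+s$ already lies in $D_1$ --- the same fact that makes the ``spread'' minors $X_iX_{j+b}-X_{i+b}X_j$ of $M_2$ redundant once $D_1$ is present, so that in fact $J=D_1+(\Delta_0,\ldots,\Delta_{n-b})$. Everything else is weighted-degree bookkeeping.
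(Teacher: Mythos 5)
Your argument is correct and is essentially the paper's own proof: both reduce the statement to the known minimal binomial generating set of $\fp$ from \cite{patil} and \cite{malooseng}, identify the $\delta_{ij}$ with the minors of the first matrix and the $\Delta_{1,j}$ with the first-column minors of the second, and observe that the remaining minors of the second matrix are redundant modulo the ideal of the rational normal curve. The only difference is that you spell out the weighted-degree verification of the easy inclusion $J\subseteq\fp$, which the paper leaves implicit.
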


\begin{proof}
It is known from \cite{patil} and \cite {malooseng}, that $\fp$ is
minimally generated by the following set of binomials
$$\{\delta_{ij}\mid 0\leq i < j \leq n-1\}\cup \{\Delta_{1, \,j}\mid
j = 2, \ldots , n+2-b\},$$ such that $$\delta_{i,\,j}:= X_{i}X_{j+1}
- X_{j}X_{i+1}, \quad 0 \leq i < j \leq n-1$$ and $$\Delta_{1,
\,j}:= X_{b+j-2}X_{n}^{a} - X_{j-2}X_{0}^{a+d}, \quad j = 2, \ldots
, n+2-b.$$
The binomials ${\delta_{ij}}$ are precisely the $n\choose
2$ generators of the ideal of $2\times 2$ minors of the matrix
$$A= {\left(\begin{array}{cccc}
\begin{array}{c}
X_{0}\\[1.5mm]
X_{1}\\[1.5mm]
\end{array} &
\begin{array}{c}
X_{1}\\[1.5mm]
X_{2}\\[1.5mm]
\end{array} &
\begin{array}{c}
\cdots\\[1.5mm]
\cdots\\[1.5mm]
\end{array} &
\begin{array}{c}
X_{n-1}\\[1.5mm]
X_{n}\\[1.5mm]
\end{array}
\end{array}\right)}.$$
On the other hand, the binomials
$\Delta _{1,j}$ are the $2\times2$ minors from the first and the $j$th column of the matrix
$$B={\left(\begin{array}{cccc}
\begin{array}{c}
X_{n}^{a}\\[1.5mm]
X_{0}^{a+d}\\[1.5mm]
\end{array} &
\begin{array}{c}
X_{0}\\[1.5mm]
X_{b}\\[1.5mm]
\end{array} &
\begin{array}{c}
\cdots\\[1.5mm]
\cdots\\[1.5mm]
\end{array} &
\begin{array}{c}
X_{n-b}\\[1.5mm]
X_{n}\\[1.5mm]
\end{array}
\end{array}\right)}.$$
Since the rest of the $2\times2$ minors of $B$ are already in the
ideal $I_2(A)$, one gets that $\fp = \fp_{1} + \fp_{2}$, where
$\fp_{1}$ and $\fp_{2}$ are the determinatal ideals generated by the
maximal minors of the matrices $A$ and $B$ respectively.
\end{proof}

\medskip

The actual generators also depend only on the first term $m_0$, the
common difference $d$ and the length $n$ of the arithmetic sequence
$(\mm )$. This is no surprise as these three numbers do determine
the arithmetic sequence. However, we also have that the number of
minimal generators of the ideal $\fp$ is ${n\choose 2} + n-b+1$ and
hence only depends on $n$ and $b$. We conjecture that the following
statement is true:

\begin {conjecture}
Let $(\mm )= m_0,\ldots , m_n$ be an arithmetic sequence. Then all
the Betti numbers of the homogeneous coordinate ring $k[\Gamma]$ of
the affine monomial curve $C(\mm)$ in $\A_k^{n+1}$ associated to
$(\mm )$ are determined by $n$ and the value of $m_0$ modulo $n$.
 \end{conjecture}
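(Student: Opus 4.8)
The plan is to bootstrap from the decomposition $\fp=\fp_1+\fp_2$ of Theorem~\ref{mingen}, whose first summand is independent of the particular arithmetic sequence. Write $\bar R:=R/\fp_1$: this is the homogeneous coordinate ring of the cone over the rational normal curve of degree $n$, so its minimal graded free $R$-resolution is the Eagon--Northcott complex of the matrix $A$, and its total Betti numbers depend only on $n$. Setting $\bar\fp_2:=(\fp_1+\fp_2)/\fp_1\subset\bar R$, we have the short exact sequence
$$
0 \lar \bar\fp_2 \lar \bar R \lar R/\fp \lar 0 ,
$$
and from the associated long exact sequence of $\mathrm{Tor}^R_\bullet(-,k)$ the conjecture reduces to two assertions: (I) the total Betti numbers of the $R$-module $\bar\fp_2$ depend only on $n$ and $b$; and (II) the maps $\mathrm{Tor}^R_i(\bar\fp_2,k)\rar\mathrm{Tor}^R_i(\bar R,k)$ vanish for every $i$, so that $\beta_i(R/\fp)=\beta_i(\bar R)+\beta_{i-1}(\bar\fp_2)$. (The same identities would in fact pin down the graded resolution up to the degree shift recorded below.)

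For (I), the idea is to determine the isomorphism type of $\bar\fp_2$ as a graded $\bar R$-module. Parametrize $\bar R$ by $X_i\mapsto pq^{i}\in k[p,q]$, so that $\bar R$ is the $k$-span of the monomials $p^{s}q^{\mu}$ with $0\le\mu\le ns$ and, in the weighted grading, $\deg p=m_0$ and $\deg q=d$. Using the relation $m_0=an+b$, a direct computation identifies each minimal generator $\Delta_{1,j}$ of $\bar\fp_2$ with $p^{a+1}q^{j-2}(q^{m_0}-p^{d})$, where $q^{m_0}-p^{d}$ is a non-zero-divisor of weighted degree $m_0d$ lying in $\ker(k[p,q]\rar k[t])$. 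Hence multiplication by $p^{a+1}(q^{m_0}-p^{d})$ realizes a graded isomorphism $M'(-m_0(a+1+d))\cong\bar\fp_2$ of $\bar R$-modules, where $M'$ is the $\bar R$-submodule of $k[p,q]$ generated by $1,q,\dots,q^{n-b}$, equivalently the span of the monomials $p^{s}q^{\mu}$ with $0\le\mu\le ns+(n-b)$. The module $M'$ -- which is $\bigoplus_{s\ge0}H^0(\PP^1,\mathcal O_{\PP^1}(ns+n-b))$, i.e.\ the module of sections of the line bundle $\mathcal O_{\PP^1}(n-b)$ along the rational normal curve -- depends only on $n$ and $b$, and its minimal free $R$-resolution is one of the standard linear complexes for such modules, so its total Betti numbers depend only on $n$ and $b$; this gives (I), and at the same time shows that the data $a,d$ enter the resolution of $\bar\fp_2$ only through the overall twist $m_0(a+1+d)$.

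For (II), the vanishing is automatic by degree reasons once $a$ is large relative to $n$: the free resolution of $\bar\fp_2$ is then concentrated in weighted degrees $\ge m_0(a+1+d)$, while the Eagon--Northcott resolution of $\bar R$ occupies only weighted degrees at most of order $n\,m_0+O(n^{2})d$, so any degree-preserving chain map between the two free resolutions is forced to be zero. The main obstacle, and the heart of the problem, is the remaining small range of $a$ (the hypotheses only force $a\ge1$), where the two degree windows can overlap and a direct argument is required. I would attack this in one of two ways. The first is to lift the inclusion $\bar\fp_2\hookrightarrow\bar R$ to an explicit chain map from the linear resolution of $M'$ to the Eagon--Northcott complex of $A$ and to verify that every entry of that chain map lies in the maximal homogeneous ideal $\mathfrak m$ of $R$ -- which should hold because $q^{m_0}-p^{d}$ can only meet the linear syzygies of the scroll $\bar R$ in strictly higher Koszul homological degree. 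The second is to bypass the mapping cone entirely and compute $\mathrm{Tor}^R_i(R/\fp,k)_\gamma\cong\widetilde H_{i-1}(\Delta_\gamma;k)$, where $\Delta_\gamma=\{F\subseteq\{0,\dots,n\}\ :\ \gamma-\sum_{i\in F}m_i\in\Gamma\}$ is the squarefree divisor complex of $\gamma$, and to show that the finitely many $\gamma$ with $\widetilde H_\bullet(\Delta_\gamma)\ne0$, together with the homotopy types of the corresponding $\Delta_\gamma$, are governed solely by $n$ and $b$; this second route has the additional merit of making independence of the ground field $k$ transparent. Finally, it is worth observing that the three cases already settled here -- $b=1$, $b=n$, and $b=2$ with $n\le4$ -- are exactly those in which step (II) is immediate, which both corroborates the plan and localizes where the genuinely new work has to be done.
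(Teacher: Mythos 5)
This statement is stated in the paper as a \emph{conjecture}: the authors prove it only for $b=1$, $b=n$, and $(b,n)=(2,4)$, and explicitly defer the general case to work in progress, so there is no proof in the paper to compare yours against. Judged on its own terms, your proposal is a sensible reduction but not a proof, and the gap sits exactly where the difficulty of the conjecture lives. Your step (I) is essentially right and is a genuinely good observation: under $X_i\mapsto pq^i$ each $\Delta_{1,j}$ becomes $p^{a+1}q^{j-2}(q^{m_0}-p^d)$, so $\bar\fp_2$ is a twist by $-(a+1+d)m_0$ of the rank-one maximal Cohen--Macaulay module $M'$ over the rational normal cone generated by $1,q,\dots,q^{n-b}$, whose $R$-resolution lies in the Eagon--Northcott family and has Betti numbers depending only on $n$ and $b$. (This is consistent with all three cases in the paper: for $b=n$ one gets $M'=\bar R$ and recovers the mapping-cone count of Theorem~\ref{b=n}; for $b=2$ one gets the canonical module, matching Gorensteinness.)

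The gap is step (II). The long exact sequence of $\mathrm{Tor}^R_\bullet(-,k)$ gives $\beta_i(R/\fp)=\beta_i(\bar R)+\beta_{i-1}(\bar\fp_2)$ \emph{only if} every map $\mathrm{Tor}^R_i(\bar\fp_2,k)\rar\mathrm{Tor}^R_i(\bar R,k)$ vanishes, i.e., only if the mapping cone of a lift of $\bar\fp_2\hookrightarrow\bar R$ is minimal. You correctly observe that degree reasons force this only when $a$ is large compared to $n$, and that for small $a$ (the hypotheses allow $a=1$, where the degree windows overlap) ``a direct argument is required'' --- but you then only name two possible attacks without carrying either out. The first (``$q^{m_0}-p^d$ can only meet the linear syzygies of the scroll in strictly higher Koszul homological degree'') is a heuristic, not an argument; the second (squarefree divisor complexes) is an entirely different method whose key finiteness/uniformity claim is again just asserted. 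This is not a minor omission: if the ranks of those $\mathrm{Tor}$ maps could vary with $a$ and $d$ for fixed $n$ and $b$, the conjecture itself would be false, so the unproven vanishing \emph{is} the conjecture in this formulation. As it stands you have reduced the statement to (II) and verified consistency with the three cases the paper already settles; you have not proved the general case.
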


In the next section, we discuss the conjecture in cases $b=1,2$ and
$n$ and construct a minimal resolution when $b=1$ or $n$ using the
aforesaid determinantal structures. In the first special case,
$b=1$, we have that $\fp$ is a determinantal ideal. The second
special case is $b=n$, that is when
$$B := {\left(\begin{array}{cccc}
\begin{array}{c}
X_{n}^{a}\\[1.5mm]
X_{0}^{a+d}\\[1.5mm]
\end{array} &
\begin{array}{c}
X_{0}\\[1.5mm]
X_{n}\\[1.5mm]
\end{array}
\end{array}\right)},$$
and therefore the ideal $\fp_{2}$ is the principal ideal generated
by $X_{0}^{a+d+1} - X_{n}^{a+1}$. The third special case is $b=2$,
that is when $k[\Gamma]$ is Gorenstein. In this case, we wiil give the Betti numbers when
the codimension $n$ is 4.

\section{The minimal resolution}

\subsection{First case: determinantal ($m_0\equiv 1$ modulo $n$).}\label{subsec_b=1}

Assume that $b=1$. In this case,
$$
B := {\left(\begin{array}{cccc}
\begin{array}{c}
X_{n}^{a}\\[1.5mm]
X_{0}^{a+d}\\[1.5mm]
\end{array} &
\begin{array}{c}
X_{0}\\[1.5mm]
X_{1}\\[1.5mm]
\end{array} &
\begin{array}{c}
\cdots\\[1.5mm]
\cdots\\[1.5mm]
\end{array} &
\begin{array}{c}
X_{n-1}\\[1.5mm]
X_{n}\\[1.5mm]
\end{array}
\end{array}\right)}
$$
and hence $I_2(A) \subset I_2 (B)$. Thus,
$\fp = \fp_{2}$ is a determinantal ideal of codimension $n$
generated by the maximal minors of the $2\times (n+1)$ matrix $B$.
Therefore, the homogeneous coordinate ring $k[\Gamma]$ is minimally
resolved by the Eagon-Northcott complex.   The resolution is given
by
 {\small
$$
0 \to \wedge^{n+1}R^{n+1}\otimes D_{n-1}(R^{2})\to \cdots \to
\wedge^3R^{n+1}\otimes D_{1}(R^2)\to \wedge^{2}R^{n+1} \to
\wedge^{2}R^{2} \to k[\Gamma] \to 0
$$
 }
such that $D_{s-1}(R^{2})= (S_{s-1}(R^2))^{*}$ denotes the
$R$-module which is the dual of the symmetric algebra of $R^{2}$ and
the module $S_{s-1}(R^{2})$ is free of rank $s$, with a basis the
set of monomials of total degree $(s-1)$ in $\lambda_{0}$,
$\lambda_{1}$ (symbols representing basis elements of $R^{2}$).
Therefore, up to an identification,
$D_{s-1}(R^{2})=(S_{s-1}(R^{2}))^{*}$ is a free $R$-module of rank
$s$, with a basis $\{\lambda_{0}^{v_0}\lambda_{1}^{v_1}\mid v_0, v_1
\in\N \,, v_0+v_1 = s-1\}$. For every $s = 1, \ldots , n$, let
$G_{s}$ denote $\wedge^{s+1}R^{n+1}\otimes D_{s-1}(R^{2})$, which is
a $R$-free module of rank $s{n+1\choose s+1}$ generated by the basis
elements $(e_{i_1}\wedge \cdots \wedge
e_{i_{s+1}})\otimes\lambda_{0}^{v_0}\lambda_{1}^{v_1}$, for every
$1\leq i_1 < i_2 < \cdots < i_{s+1}\leq n+1$ and $v_0, v_1 \in\N \,,
v_0 + v_1 = s-1$. The differentials $d_{s}: G_{s} \rar G_{s-1}$ are
given by the following formulae:
$$d_{1}(e_{i_1}\wedge e_{i_2}) = X_{i_{1}-1}X_{i_{2}} -
X_{i_{2}-1}X_{i_{1}}\,, \quad \forall \ 1\leq i_{1} < i_{2} \leq
n+1,$$ and for $s = 2, \ldots , n$ and $1\leq i_{1} < \cdots <
i_{s+1} \leq n+1$,
 {\footnotesize
\begin{eqnarray*}
d_{s}((e_{i_1}\wedge \cdots \wedge
e_{i_{s+1}})\otimes\lambda_{0}^{v_0}\lambda_{1}^{v_1}) & = &
\sum_{j=1}^{s+1} (-1)^{j+1}X_{i_{j}-1}((e_{i_1}\wedge \cdots
\wedge \widehat{e_{i_{j}}}\wedge\cdots
e_{i_{s+1}})\otimes\lambda_{0}^{v_{0}-1}\lambda_{1}^{v_1})\\ {} &
{} & + \sum_{j=1}^{s+1} (-1)^{j+1}X_{i_{j}}((e_{i_1}\wedge \cdots
\wedge \widehat{e_{i_{j}}}\wedge\cdots
e_{i_{s+1}})\otimes\lambda_{0}^{v_0}\lambda_{1}^{v_{1}-1})\\
\end{eqnarray*}
 }
such that summands on the right hand side involve only
non-negative powers of $\lambda_{0}$ and $\lambda_{1}$.

\medskip

We have therefore proved the following:

\begin{theorem}{\label{b=1}}
The minimal free resolution of the homogeneous coordinate ring
$k[\Gamma]$ of the affine monomial curve $C(\mm)$ in $\A_k^{n+1}$
associated to the arithmetic sequence of integers $(\mm)=m_{0},
\ldots,m_{n}$ with the property $m_{0}\equiv 1$ modulo $n$ is
 {\small
$$
0 \rar R^{n}\stackrel{d_n}{\lar} R^{(n-1){n+1\choose
n}}\stackrel{d_{n-1}}{\lar} \cdots \lar R^{s {n+1\choose s+1}}
\stackrel{d_s}{\lar} \cdots \lar R^{n+1\choose 2}
\stackrel{d_1}{\lar} R \stackrel{\phi}{\lar} k[\Gamma] \rar 0.
$$
 }
In particular, the Betti numbers are $\beta_{0}=1$ and $\beta_{s} =
s{n+1\choose s+1}$ for every $s\in[1,n]$.
\end{theorem}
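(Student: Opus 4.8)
The plan is to recognize that everything has already been assembled in the discussion preceding the statement, so the proof is essentially a matter of citing the Eagon--Northcott complex and checking minimality and ranks. First I would invoke Theorem~\ref{mingen} together with the observation made in Subsection~\ref{subsec_b=1}: when $b=1$, the matrix $B$ has the rational normal scroll matrix $A$ as a submatrix (its last $n$ columns), so $I_2(A)\subseteq I_2(B)$ and hence $\fp=\fp_1+\fp_2=\fp_2=I_2(B)$. Thus $\fp$ is the ideal of maximal minors of a $2\times(n+1)$ matrix over $R$, and since $\operatorname{codim}\fp=n=(n+1)-2+1$ is the maximal (generic) codimension, the ideal is perfect and the Eagon--Northcott complex associated to $B$ is acyclic and resolves $R/\fp=k[\Gamma]$. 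This is the standard Eagon--Northcott theorem; I would cite the Buchsbaum--Rim / Eagon--Northcott references (e.g.\ \cite{eisenbud} or the original \cite{eagon-northcott}).

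Next I would verify that this resolution is \emph{minimal}. The length of the Eagon--Northcott complex is $n$, which matches $\operatorname{pd}_R k[\Gamma]=\operatorname{codim}\fp=n$, so the complex has the right length. For minimality it suffices to observe that every entry of every differential $d_s$ lies in the irrelevant maximal ideal $\mm=(X_0,\ldots,X_n)$: from the explicit formulae for $d_1$ and for $d_s$ ($s\ge 2$) written above, each coefficient is either a variable $X_i$ or a product involving the entries of $B$ (all of which are monomials in $\mm$ since $a\ge 1$ and $a+d\ge 1$), hence no unit appears, so no cancellation occurs and the Betti numbers are exactly the ranks of the free modules $G_s$.

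The remaining point is the rank computation. The $s$-th term of the Eagon--Northcott complex is $\wedge^{s+1}R^{n+1}\otimes_R D_{s-1}(R^2)$; since $\wedge^{s+1}R^{n+1}$ is free of rank $\binom{n+1}{s+1}$ and $D_{s-1}(R^2)$ is free of rank $s$ (it has basis the $s$ monomials $\lambda_0^{v_0}\lambda_1^{v_1}$ with $v_0+v_1=s-1$), the rank is $s\binom{n+1}{s+1}$ for $s=1,\ldots,n$; at the end, $s=n$ gives rank $n\binom{n+1}{n+1}=n$, and at the start the free module mapping onto $k[\Gamma]$ has rank $\wedge^2R^2=1$ in homological degree $0$, while $\wedge^2 R^{n+1}$ in degree $1$ has rank $\binom{n+1}{2}$, as claimed. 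Setting $\beta_0=1$ and $\beta_s=s\binom{n+1}{s+1}$ then gives the stated Betti numbers.

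I do not expect a serious obstacle here: the only things to be careful about are (a) confirming that $I_2(A)\subseteq I_2(B)$ really does collapse $\fp$ to a single determinantal ideal in the case $b=1$ — which is immediate since $A$'s columns are literally among $B$'s columns — and (b) making sure the Eagon--Northcott differentials as displayed agree with the standard ones so that the minimality argument via $\mm$-membership of entries applies verbatim. Both are routine, so the ``hard part'' is really just bookkeeping with the indexing of the complex rather than any genuine mathematical difficulty.
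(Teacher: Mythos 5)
Your proposal is correct and follows essentially the same route as the paper: the text preceding the theorem observes that for $b=1$ the columns of $A$ occur among those of $B$, so $\fp=I_2(B)$ is determinantal of maximal codimension $n$ and is resolved by the Eagon--Northcott complex of $B$, whose terms $\wedge^{s+1}R^{n+1}\otimes D_{s-1}(R^2)$ have rank $s\binom{n+1}{s+1}$. Your added remarks on minimality (all entries of the differentials lie in the irrelevant maximal ideal) and on the length of the complex are exactly the routine verifications the paper leaves implicit.
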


\begin{remark}\rm
The case $a=1$ (and $b=1$) corresponds to semigroups $\Gamma$ of
maximal embedding dimension, i.e., those semigroups such that the
inequality $m(\Gamma)\geq e(\Gamma)$ is an equality, being
$m(\Gamma):=m_0=an+b$ and $e(\Gamma):=n+1$ the multiplicity and the
embedding dimension respectively.
\end{remark}

\medskip

Note that the above argument gives indeed the graded resolution
(with respect to the grading given by $\deg(X_i)=m_i=m_0+id$):

\begin {corollary} {\label{cor_b=1}}
Under the hypothesis of Theorem~\ref{b=1}, the minimal graded free
resolution of $k[\Gamma]$ is given by

 {\small
\begin{eqnarray*}
0&\lar&\displaystyle{ \bigoplus_{k=1}^{n}R(-(a+n+d)m_0+kd - {n+1
\choose 2}d)
}\\
 &\stackrel{d_{n}}{\lar}&\displaystyle{
\left(\bigoplus_{k=1}^{n-1}R(-nm_0+kd - {n+1 \choose 2} d)\right)
 }\\
 &&\displaystyle{
\oplus \left(\bigoplus_{1\le r_1< \ldots r_{n-1}\le n }\left(
\bigoplus_{k=1}^{n-1} R(-(a+d+n-1)m_0+kd-\sum_{i=1}^{n-1} r_id)
\right)
\right) }\\
 &\stackrel{d_{n-1}}{\lar}&\cdots\\
 &\vdots&\\
\cdots &\stackrel{d_s}{\lar}&\displaystyle{ \left(\bigoplus_{1\le
r_1<\ldots < r_s \le n}\left(\bigoplus_{k=1}^{s-1}R(-sm_0
+kd-\sum_{i=1}^{s} r_i d)\right) \right)\oplus
 }\\
 &&\displaystyle{
\left(\bigoplus_{1\le r_1<\ldots < r_{s-1} \le n}
\left(\bigoplus_{k=1}^{s-1}R(-(a+s-1+d)m_0 +kd-\sum_{i=1}^{s-1} r_i
d)\right)\right)
 }\\
 &\stackrel{d_{s-1}}{\lar}&\cdots\\
 &\vdots&\\
\cdots &\stackrel{d_2}{\lar}&\displaystyle{ \left(\bigoplus_{1\le
r_1<r_2\le n}R(-2m_0-(r_1+r_2-1)d)\right) \oplus
\left(\bigoplus_{k=0}^{n-1} R(-(a+1+d)m_0-kd)\right)
 }\\&
 \stackrel{d_1}{\lar}& \quad R
\quad\stackrel{\varphi}{\lar}\quad k[\Gamma]\rar 0\,.
\end{eqnarray*}
 }
\end {corollary}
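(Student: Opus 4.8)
The plan is to read off the graded resolution directly from the Eagon--Northcott complex already exhibited in Theorem~\ref{b=1}, simply by keeping track of the internal degrees of all the basis elements. The underlying complex is fixed; what remains is purely bookkeeping, so the proof is essentially a computation of the degree of each free summand $R(-\ast)$ appearing as a twist of $R$ in each homological position.

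\medskip

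First I would fix the grading convention $\deg(X_i)=m_i=m_0+id$ and assign to each symbol $\lambda_0,\lambda_1$ (the basis of $R^2$, thought of as spanned by the two rows of the matrix $B$) an internal degree so that the differentials are homogeneous of degree $0$. Reading $d_1$, the minor $d_1(e_{i_1}\wedge e_{i_2})=X_{i_1-1}X_{i_2}-X_{i_2-1}X_{i_1}$ involving the columns indexed by $i_1<i_2$ has degree $m_{i_1-1}+m_{i_2}$; but one must treat the first column of $B$ separately, since its entries are $X_n^a$ and $X_0^{a+d}$, which have degree $am_n=a(m_0+nd)=(a+d+1)m_0-d+\,\cdots$ — more precisely $am_n=am_0+and$ and $(a+d)m_0$, and one checks these two are equal, namely both equal $(a+d)m_0-d$ up to the relation $am_n=(a+d)m_0$ hidden in $\fp_2$; this coincidence is exactly what makes $B$ a $1$-generic matrix whose maximal minors generate $\fp$. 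So I would set $\deg\lambda_0$ and $\deg\lambda_1$ on the row module so that the ``row degree'' of the first column is $(a+d)m_0$ and of the $(j+1)$-st column (for $j\ge 1$) is $m_{j-1}+m_j$ built from $X_{j-1},X_j$; concretely the column degrees are $c_1=(a+d)m_0$ and $c_{r}=(2m_0+(2r-3)d)$ for $r=2,\dots,n+1$, i.e. $c_{r}=m_{r-2}+m_{r-1}$.

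\medskip

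Next, a basis element $(e_{i_1}\wedge\cdots\wedge e_{i_{s+1}})\otimes\lambda_0^{v_0}\lambda_1^{v_1}$ of $G_s=\wedge^{s+1}R^{n+1}\otimes D_{s-1}(R^2)$ contributes a summand $R(-N)$ where $N$ is the sum of the $s+1$ chosen column degrees $c_{i_1}+\cdots+c_{i_{s+1}}$ plus the contribution $v_0\deg\lambda_0+v_1\deg\lambda_1$ of the divided-power part; since $D_{s-1}$ is dual to $S_{s-1}$, the $\lambda$-contribution is $-(s-1)$ times the common row-degree shift, which in these conventions is $-(s-1)d$ plus a global normalization. Splitting the $s+1$ indices according to whether the first column (index $1$, degree $(a+d)m_0$) is among them or not gives exactly the two direct summands that appear in the statement of Corollary~\ref{cor_b=1}: the ``first column absent'' part, where all $s+1$ (equivalently, after reindexing, $s$ plus one) indices lie in $[2,n+1]$ and produce twists $R(-sm_0+kd-\sum r_i d)$ with $1\le r_1<\cdots<r_s\le n$ and $k\in[1,s-1]$ from $D_{s-1}$; and the ``first column present'' part, where one of the indices is $1$ contributing $(a+d)m_0$, the remaining $s$ lie in $[2,n+1]$, producing $R(-(a+s-1+d)m_0+kd-\sum_{i=1}^{s-1}r_id)$. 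I would then check the two endpoints by hand: at $s=n$ the exterior power $\wedge^{n+1}R^{n+1}$ is rank $1$ so there is no free choice of columns, forcing all $n$ columns $2,\dots,n+1$ plus column $1$, giving the single family $R(-(a+n+d)m_0+kd-\binom{n+1}{2}d)$, $k\in[1,n]$; at $s=1$ one recovers the $\binom{n+1}{2}$ quadratic generators $R(-2m_0-(r_1+r_2-1)d)$ together with the $n$ generators $R(-(a+1+d)m_0-kd)$, $k\in[0,n-1]$, coming from minors using the first column.

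\medskip

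The only real obstacle is getting the $\lambda$-degree normalization right so that all three cross-checks — the top term, the bottom term, and homogeneity of each $d_s$ — are simultaneously satisfied; once the shift $\deg\lambda_0=m_1=m_0+d$, $\deg\lambda_1=m_0$ (or whichever pair makes $B$ graded) is pinned down, every twist in the displayed resolution is forced, and the corollary follows by substituting into the complex of Theorem~\ref{b=1}. I will present the verification of homogeneity of $d_s$ as the key lemma and leave the endpoint checks as direct substitution.
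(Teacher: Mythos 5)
Your proposal is correct and follows the same route as the paper: the corollary is obtained purely by degree bookkeeping in the Eagon--Northcott complex of the homogeneous matrix $B$, splitting the choices of columns according to whether or not the first column appears (the paper itself gives no more of a proof than this). The one point to tidy up is the degree convention for the first column: $am_n=(a+d)m_0-d$, so the two entries are \emph{not} of equal degree but differ by $d$, exactly as in every other column of $B$ --- this is what makes $B$ a graded matrix, and once the row shifts $(d,0)$ are fixed accordingly your twists coincide with the displayed ones.
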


\subsection{Second case: almost determinantal ($m_0\equiv n$ modulo $n$).}\label{subsec_b=n}

In this case, we show that $k[\Gamma]$ can be resolved minimally by
a mapping cone  using the resolution for $R/\fp_{1}$, which is the
homogeneous coordinate ring of the rational normal curve. The
minimal free resolution for $R/\fp_{1}$ is the Eagon-Northcott
complex given by
 {\small
$$
\mathcal{E}: 0 \lar \wedge^{n}R^{n}\otimes
(S_{n-2}(R^{2}))^{*}\lar \cdots \lar \wedge^{2}R^{n}\otimes
(S_{0}(R^{2}))^{*} \lar \wedge^{2}R^{2} \lar R/\fp_{1} \lar 0.
$$
 }
Since $E_{s}:= \wedge^{s+1}R^{n}\otimes (S_{s-1}(R^{2}))^{*}=
R^{s{n\choose s+1}}$, for every $s= 1, \ldots , n-1$,
$E_{0}:=\wedge^{2}R^{2}=R$, the resolution $\mathcal{E}$ takes the
form
 {\small
$$
0 \lar R^{n-1}\lar R^{(n-2){n\choose n-1}}\lar
\cdots \lar R^{s {n\choose s+1}} \lar \cdots \lar R^{n\choose 2}
\lar R \lar R/\fp_{1} \lar 0.
$$
 }
The differentials $d_{s}^{1}: E_{s} \rar E_{s-1}$ are defined
similarly as $d_{s}$ above, for every $s= 1, \ldots , n-1$.

\medskip

Now consider the short exact sequence of $R$-modules $$0 \lar
R/(\fp_{1}:\Delta_{1, \,2}) \stackrel{\Delta_{1, \,2}}{\lar}
R/\fp_{1} \lar R/\fp_{1}+ (\Delta_{1, \,2}) \lar 0$$ where the map
$R/(\fp_{1}:\Delta_{1, \,2}) \stackrel{\Delta_{1, \,2}}{\lar}
R/\fp_{1}$ is the multiplication by $\Delta_{1, 2}$. The colon
ideal $(\fp_{1}:\Delta_{1, \,2})$ is exactly equal to $\fp_{1}$,
since $\fp_{1}$ is a prime ideal and $\Delta_{1, \,2}\not\in
\fp_{1}$, being a part of a minimal generating set for the ideal
$\fp = \fp_{1} + \fp_{2}$. Therefore, the above short exact
sequence becomes, $$0 \lar R/\fp_{1}(-(a+d+1)m_{0})
\stackrel{\Delta_{1, \,2}}{\lar} R/\fp_{1} \lar R/\fp_{1}+
(\Delta_{1, \,2}),$$ taking gradation into account. We define the
graded complex homomorphism $$\psi: \mathcal{E}(-(a+d+1)m_{0})
\lar \mathcal{E}$$ as the multiplication by $\Delta_{1, \,2}$,
which is a lift of the map $$R/\fp_{1}(-(a+d+1)m_{0})
\stackrel{\Delta_{1, \,2}}{\lar} R/\fp_{1}.$$ In fact, the complex homomorphism
$\psi $ is simply multiplication by $\Delta _{1,\,2}$.  The mapping cone
$\mathcal{F}(\psi)$ is given by the free modules $F_{s} =
E_{s-1}\oplus E_{s}= R^{(s-1){n\choose s}} \oplus R^{s{n\choose
s+1}}$, for every $s= 0, \ldots , n$ (with $E_{-1} = E_{n}=0$) and
the differentials $d_{s}^{2}: F_{s} \rar F_{s-1}$, defined as
$d_{s}^{2}(x,y) = (\psi_{s-1}(y)+d_{s}^{1}(x),
\,-d_{s-1}^{1}(y))$, for every $s = 1, \ldots , n$.

\medskip

$R/\fp_{1}+(\Delta_{1,\,2})$ is resolved by the mapping cone
$\mathcal{F}(\psi)$.   It is minimal because each of the maps in
$\psi$ is of positive degree and the resolution $\mathcal {E}$ is
minimal.   Hence, the mapping cone $\mathcal{F}(\psi)$ is the
minimal free resolution of the homogeneous coordinate ring
$k[\Gamma] = R/\fp$. We have therefore proved the following:

\begin{theorem}{\label{b=n}}
The minimal free resolution of the homogeneous coordinate ring
$k[\Gamma]$ of the affine monomial curve $C(\mm)$ in $\A_k^{n+1}$
associated to the arithmetic sequence of integers
$(\mm)=m_{0},\ldots,m_{n}$ with the property $m_{0}\equiv n$ modulo
$n$ is
$$
0 \rar R^{n-1}\stackrel{d_n^2}{\lar} R^{(n-2){n\choose n-1}
+ (n-1)}\stackrel{d_{n-1}^2}{\lar} \cdots \lar R^{1 + { n\choose 2}}
\stackrel{d_1^2}{\lar} R \stackrel{\phi}{\lar} k[\Gamma] \rar 0.
$$
In particular, the Betti numbers are $\beta_{0}=1$, $\beta_1=1 + {
n\choose 2}$ and $\beta_{s} = (s-1){n\choose s} + s{n\choose s+1}$
for every $s\in[2,n]$.
\end{theorem}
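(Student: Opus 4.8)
The plan is to run the mapping cone construction described just above the statement and then read off the ranks. First I would record the Eagon--Northcott resolution $\mathcal E$ of $R/\fp_1$ in the explicit form already displayed, so that $E_0=R$, $E_1=R^{\binom n2}$, and $E_s=R^{s\binom n{s+1}}$ for $1\le s\le n-1$, with $E_s=0$ for $s\ge n$ and $s<0$. Next I would invoke the short exact sequence
$$0\lar R/\fp_1(-(a+d+1)m_0)\stackrel{\Delta_{1,2}}{\lar} R/\fp_1\lar R/\bigl(\fp_1+(\Delta_{1,2})\bigr)\lar 0,$$
which is valid because $\fp_1$ is prime and $\Delta_{1,2}\notin\fp_1$ (it belongs to a minimal generating set of $\fp$), so that $(\fp_1:\Delta_{1,2})=\fp_1$. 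Then I would lift multiplication by $\Delta_{1,2}$ to a graded complex map $\psi:\mathcal E(-(a+d+1)m_0)\to\mathcal E$ and form its mapping cone $\mathcal F(\psi)$, whose term in homological degree $s$ is $F_s=E_{s-1}\oplus E_s$ with differential $d_s^2(x,y)=(\psi_{s-1}(y)+d_s^1(x),-d_{s-1}^1(y))$. The cone resolves $R/\bigl(\fp_1+(\Delta_{1,2})\bigr)$, and by Theorem~\ref{mingen} together with $b=n$ this quotient is exactly $k[\Gamma]=R/\fp$, since in the case $b=n$ the matrix $B$ has only two columns and $\fp_2=(\Delta_{1,2})=(X_0^{a+d+1}-X_n^{a+1})$.

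The minimality of the cone is the one genuinely substantive point, and it is the step I would expect to be the main obstacle to justify cleanly. The argument is that $\mathcal F(\psi)$ is minimal as soon as $\mathcal E$ is minimal and every component of $\psi$ raises degree, i.e. has entries in the homogeneous maximal ideal of $R$. This holds here because $\psi$ is literally multiplication by the single homogeneous element $\Delta_{1,2}$, which has positive degree $(a+d+1)m_0>0$; hence in the differential $d_s^2$ the block $\psi_{s-1}$ contributes no unit, and the diagonal blocks $\pm d_{s-1}^1$, $d_s^1$ are already minimal. So after choosing bases, no entry of $d_s^2$ is a unit, and $\mathcal F(\psi)$ is a minimal free resolution of $k[\Gamma]$. (One should note this also uses that the grading is positive, which is guaranteed by $m_0<m_1<\cdots<m_n$ with all $m_i>0$.)

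Finally I would compute the Betti numbers by taking ranks. We get $\beta_0=\operatorname{rank}F_0=\operatorname{rank}E_0=1$; $\beta_1=\operatorname{rank}(E_0\oplus E_1)=1+\binom n2$; for $2\le s\le n-1$, $\beta_s=\operatorname{rank}(E_{s-1}\oplus E_s)=(s-1)\binom ns+s\binom n{s+1}$; and for $s=n$, since $E_n=0$, $\beta_n=\operatorname{rank}E_{n-1}=n-1$. Noting that the formula $(s-1)\binom ns+s\binom n{s+1}$ also yields $n-1$ at $s=n$ (the second term vanishes and $\binom nn=1$), the uniform statement $\beta_s=(s-1)\binom ns+s\binom n{s+1}$ for all $s\in[2,n]$ holds, which is exactly the claimed resolution
$$0\rar R^{n-1}\stackrel{d_n^2}{\lar}R^{(n-2)\binom n{n-1}+(n-1)}\stackrel{d_{n-1}^2}{\lar}\cdots\lar R^{1+\binom n2}\stackrel{d_1^2}{\lar}R\stackrel{\phi}{\lar}k[\Gamma]\rar 0.$$
This completes the proof.
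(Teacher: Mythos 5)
Your proposal is correct and follows essentially the same route as the paper: the mapping cone of multiplication by $\Delta_{1,2}$ on the Eagon--Northcott resolution of $R/\fp_1$, with minimality coming from the positive degree of $\Delta_{1,2}$ and the minimality of $\mathcal E$, and the Betti numbers read off from $F_s=E_{s-1}\oplus E_s$. Your explicit check that the uniform formula $(s-1)\binom{n}{s}+s\binom{n}{s+1}$ also covers $s=n$ is a small but welcome addition the paper leaves implicit.
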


As in Section~\ref{subsec_b=1}, one can be more precise and give the
graded resolution. First note that the graded minimal resolution of
$\fp_1$ is given by
\begin{eqnarray*}
0&\lar&\displaystyle{
 \bigoplus_{k=1}^{n-1}R(-nm_0+kd - {n+1\choose 2} d)\quad
 \stackrel{d^1_{n-1}}{\lar}\quad\cdots
}\\
&\cdots &\stackrel{d^1_s}{\lar}\displaystyle{ \bigoplus_{1\le
r_1<\ldots < r_s \le n} \left(\bigoplus_{k=1}^{s-1} R(-sm_0 +kd
-\sum r_i d)\right)\quad \stackrel{d^1_{s-1}}{\lar}\quad\cdots
 }\\
&\cdots& \stackrel{d^1_2}{\lar}\displaystyle{ \bigoplus_{1\le
r_1<r_2\le n}R(-2m_0-(r_1+r_2-1)d)
 \quad
 \stackrel{d^1_1}{\lar} \quad R
\quad\lar\quad R/\fp_{1}\rar 0\,.}
\end{eqnarray*}

\begin {corollary} {\label{cor_b=n}}
Under the hypothesis of Theorem~\ref{b=n}, the minimal graded free
resolution of $k[\Gamma]$ is given by
\begin{eqnarray*}
 0&\lar&\displaystyle{
\bigoplus_{k=1}^{n-1}R(-(a+n+d+1)m_0+kd -{n+1\choose 2} d)
 }\\
 &\stackrel{d^2_{n}}{\lar}&\displaystyle{
\left(\bigoplus_{k=1}^{n-1}R(-nm_0+kd - {n+1\choose 2}
d)\right)\oplus
 }\\
 &&\displaystyle{
\left(\bigoplus_{1\le r_1<\ldots < r_{n-1} \le n}\left(\bigoplus
_{k=1}^{n-2}R(-(a+n+d)m_0 +kd -\sum r_i d)\right)\right)
 }\\
 &\stackrel{d^2_{n-1}}{\lar}&\cdots\\
 &\vdots&\\
\cdots &\stackrel{d^2_s}{\lar}&\displaystyle{ \left(\bigoplus_{1\le
r_1<\ldots < r_s \le n}\left(\bigoplus_{k=1}^{s-1} R(-sm_0 +kd -\sum
r_i d)\right) \right) \oplus
 }\\
 &&\displaystyle{
\left(\bigoplus_{1\le r_1<\ldots < r_{s-1} \le n}\left(\bigoplus
_{k=1}^{s-2}R(-(a+s+d)m_0 +kd -\sum r_i d)\right)\right)
 }\\
 &\stackrel{d^2_{s-1}}{\lar}&\cdots\\
 &\vdots&\\
\cdots &\stackrel{d^2_2}{\lar}&\displaystyle{ \left(\bigoplus_ {1\le
r_1<r_2\le n}R(-2m_0-(r_1+r_2-1)d)\right) \oplus R(-(a+1+d)m_0)}\\
 &\stackrel{d^2_1}{\lar}& R \stackrel{\varphi}{\rar} k[\Gamma]\rar 0.
\end{eqnarray*}
\end {corollary}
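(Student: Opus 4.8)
The statement is a graded bookkeeping refinement of Theorem~\ref{b=n}, obtained by carrying the weighted grading through the mapping cone constructed there, and I would organize it as follows. First, recall from the proof of Theorem~\ref{b=n} that $k[\Gamma]=R/\fp$ is minimally resolved by the mapping cone $\mathcal{F}(\psi)$, where $\mathcal{E}$ is the Eagon-Northcott complex resolving $R/\fp_1$ and $\psi\colon\mathcal{E}\rar\mathcal{E}$ is multiplication by $\Delta_{1,\,2}$. Since the matrix $A$ whose maximal minors generate $\fp_1$ has for entries the variables $X_i$, of weight $m_i=m_0+id$, the complex $\mathcal{E}$ carries a grading for which $\mathcal{E}_1\rar R$ sends the free generators onto the quadratic binomials $\delta_{i,\,j}$; propagating this through the Eagon-Northcott differentials yields precisely the graded resolution of $\fp_1$ displayed immediately before the statement. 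This step is standard and involves no choices.

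Next I would pin down the internal degree of the comparison map. When $b=n$ one has $m_0=(a+1)n$ and $\Delta_{1,\,2}=X_n^{a+1}-X_0^{a+d+1}$, whose two monomials both have weight $(a+d+1)m_0=(a+1)m_n$; hence $\psi$ is homogeneous of degree $(a+d+1)m_0$, i.e. a degree-zero morphism $\mathcal{E}(-(a+d+1)m_0)\rar\mathcal{E}$, and the short exact sequence from the proof of Theorem~\ref{b=n} refines to $0\lar (R/\fp_1)(-(a+d+1)m_0)\stackrel{\Delta_{1,\,2}}{\lar}R/\fp_1\lar R/\fp\lar 0$.

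Then I would assemble the graded mapping cone term by term: $F_0=\mathcal{E}_0=R$ and, for $1\le s\le n$, $F_s=\mathcal{E}_{s-1}(-(a+d+1)m_0)\oplus\mathcal{E}_s$ with the differentials $d^2_s$ already written down in the proof of Theorem~\ref{b=n}. Substituting the explicit twists of $\mathcal{E}$ and applying the shift $-(a+d+1)m_0$ to the $\mathcal{E}_{s-1}$ summand produces exactly the displayed resolution; the two extreme homological degrees are read off separately, since at $s=1$ the summand $\mathcal{E}_0(-(a+d+1)m_0)$ is just $R(-(a+1+d)m_0)$, while at $s=n$ the summand $\mathcal{E}_n$ vanishes ($\mathcal{E}$ has length $n-1$), so $F_n=\mathcal{E}_{n-1}(-(a+d+1)m_0)$. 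Minimality is inherited from Theorem~\ref{b=n}: $\mathcal{E}$ is minimal and every entry of $\psi$ lies in $\mm=(X_0,\ldots,X_n)$, being a scalar multiple of the positive-degree form $\Delta_{1,\,2}$, so no cancellation is possible in the cone.

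The only point demanding care is the bookkeeping in the last step: correctly threading the twists through the Eagon-Northcott differentials, in particular keeping track of the internal degrees attached to the divided-power factors $D_{s-1}(R^2)$, and then pairing, in each homological degree $s$, the shifted contribution of $\mathcal{E}_{s-1}$ with that of $\mathcal{E}_s$. There is no conceptual obstacle beyond Theorems~\ref{mingen} and~\ref{b=n}.
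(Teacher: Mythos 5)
Your proposal is correct and follows exactly the route the paper intends: take the graded Eagon--Northcott resolution of $R/\fp_1$ displayed before the corollary, observe that $\Delta_{1,2}=X_n^{a+1}-X_0^{a+d+1}$ is homogeneous of weighted degree $(a+d+1)m_0$ (using $m_0=(a+1)n$ when $b=n$), and read off the twists of $F_s=E_s\oplus E_{s-1}(-(a+d+1)m_0)$ in the mapping cone of Theorem~\ref{b=n}. The degree bookkeeping you describe matches the displayed shifts in every homological degree, including the two extremes.
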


\subsection{Third case: Gorenstein of codimension 4 ($m_0\equiv 2$ modulo $n$ and $n=4$).}

When $b=2$, the ideal $\fp$ is Gorenstein of height $n$ and hence
the first and the last Betti numbers are known. This provides a very
interesting family of Gorenstein ideals of height $n$ generated by
$(n-1)(n+2)/2$ binomials.

\medskip

When $n=2$, $\fp$ is a complete intersection and when $n=3$, $\fp$
is the ideal of $4\times 4$ pfaffians of a $5\times 5$ skew
symmetric matrix. For $n=4$, the ideal $\fp$ is a height 4
Gorenstein ideal minimally generated by $9$ elements and therefore
has Betti numbers 9, 16, 9 and 1.

\medskip

In fact, we can show that with this weighted grading, the shifts in
a graded resolution of $R/\fp$ are as follows:

\begin{theorem} {\label{gor-res-grad}}
Given two non-negative integers $a$ and $d$, consider the monomial
curve $C(\mm)$ in $\A_k^5$ associated to the arithmetic sequence
$(\mm)=4a+2, 4a+2+d, 4a+2+2d, 4a+2+3d, 4a+2+4d$. Set $q:=2a+1$. Then
the defining ideal $\fp$ of $C(\mm)$ is a height four Gorenstein
ideal whose minimal graded free resolution is

 {\small
\begin{eqnarray*}
0&\rar&\displaystyle{R(-q(q+2d+9)-9d)}\\
 &\rar&\displaystyle{
\left(\bigoplus_{k=7}^9 R(-8q-kd)\right)\oplus \left(\bigoplus
_{k=3}^7R(-q(q+2d+5)-kd)\right) \oplus R(-q(q+2d+5)-5d)
 }\\
 &\rar&\displaystyle{
R(-6q-4d)\oplus\left(\bigoplus_{k=5}^7R^2(-6q-kd)\right)\oplus
R(-6q-8d)\ \oplus R(-q(q+2d+3)-d)
 }\\& &\displaystyle{
\oplus \left(\bigoplus_{k=2}^4R^2(-q(q+2d+3)-kd)\right)\oplus
R(-q(q+2d+3)-5d)
 }\\&\rar&\displaystyle{
\left(\bigoplus_{k=2}^6 R(-4q-kd)\right)\oplus R(-4q-4d)
\oplus\left(\bigoplus_{k=0}^2 R(-q(q+2d+1)-kd)\right)}\ \rar\fp \rar
0.
\end{eqnarray*}
 }
\end{theorem}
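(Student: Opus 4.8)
The plan is to reduce the theorem, via the Gorenstein symmetry, to two computations: the top shift $c$ and the middle free module $F_2$. By \cite[Corollary~6.2]{patseng} together with the discussion in the Introduction, $b=2$ forces $R/\fp=k[\Gamma]$ to be Gorenstein; since $\fp$ is perfect of codimension $n=4$, its minimal graded free resolution has the self-dual shape $0\rar R(-c)\rar F_3\rar F_2\rar F_1\rar R\rar k[\Gamma]\rar 0$ with $F_3\cong\operatorname{Hom}_R(F_1,R)(-c)$, $F_2\cong\operatorname{Hom}_R(F_2,R)(-c)$, and $\beta_0=\beta_4=1$. By Theorem~\ref{mingen}, for $n=4$, $b=2$ the ideal $\fp$ is minimally generated by the six minors $\delta_{ij}$ ($0\le i<j\le 3$) and the three binomials $\Delta_{1,2},\Delta_{1,3},\Delta_{1,4}$, so $\beta_1=\beta_3=9$ and $\beta_2=16$ from the Euler relation $\sum_i(-1)^i\beta_i=0$. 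A routine degree count gives $\deg\delta_{ij}=2m_0+(i+j+1)d$ and $\deg\Delta_{1,j}=(a+d+1)m_0+(j-2)d$; substituting $m_0=2q$ (so $a=(q-1)/2$) rewrites these as the exponents $4q+kd$ and $q(q+2d+1)+kd$ of the stated $F_1$, and hence, by duality, of $F_3$ once $c$ is known.

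For the top shift: since $\gcd(m_0,\dots,m_n)=1$, $\Gamma$ is a numerical semigroup and $k[\Gamma]$ is a one-dimensional graded Cohen--Macaulay ring whose $a$-invariant is the Frobenius number $F(\Gamma)$. Dualizing the resolution against $\omega_R=R(-\sum_{i=0}^{4}m_i)$ and using $\omega_{k[\Gamma]}\cong k[\Gamma](F(\Gamma))$ yields $c=F(\Gamma)+\sum_{i=0}^{4}m_i$. The classical formula for the Frobenius number of an arithmetic sequence (here $n\mid m_0-2$) gives $F(\Gamma)=a\,m_0+(m_0-1)d$, and since $\sum_{i=0}^{4}m_i=5m_0+10d=10q+10d$ we obtain $c=q(q+2d+9)+9d$, which determines $F_4$ and completes $F_3$.

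The real content is the middle module $F_2$, which I would obtain — together with a concrete resolution, not merely the Betti numbers — from a mapping cone built on $\fp=\fp_1+(\Delta_{1,2},\Delta_{1,3},\Delta_{1,4})$, where $\fp_1=I_2(A)$ is the rational-normal-quartic ideal, minimally resolved by the Eagon--Northcott complex $\mathcal E\colon 0\rar E_3\rar E_2\rar E_1\rar R\rar R/\fp_1\rar 0$ with $E_1=R^6$, $E_2=R^8$, $E_3=R^3$ and shifts (Corollary~\ref{cor_b=n}) $2m_0+kd$, $3m_0+kd=6q+kd$, $4m_0+kd=8q+kd$. From $0\rar \overline{\mathcal Q}\rar R/\fp_1\rar k[\Gamma]\rar 0$, with $\overline{\mathcal Q}=\fp/\fp_1$ the image of $(\Delta_{1,2},\Delta_{1,3},\Delta_{1,4})$, everything reduces to an $R$-free resolution $\mathcal G\colon 0\rar G_3\rar G_2\rar G_1\rar G_0\rar\overline{\mathcal Q}\rar 0$ of $\overline{\mathcal Q}$ (a height-one ideal of the Cohen--Macaulay domain $R/\fp_1$, of projective dimension $3$): lifting the inclusion to a comparison map $\mathcal G\to\mathcal E$ and taking the mapping cone resolves $k[\Gamma]$, and the cone is automatically minimal because the comparison maps have entries in the maximal ideal (a short degree estimate using $q=2a+1\ge 3$). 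Rank counting forces $G_0=R^3$, $G_1=R^8$, $G_2=R^6$, $G_3=R$, so $F_1=G_0\oplus E_1$, $F_2=G_1\oplus E_2$, $F_3=G_2\oplus E_3$, $F_4=G_3$. To pin the shifts one resolves $\overline{\mathcal Q}$ explicitly: the generators $\overline{\Delta}_{1,j}$ satisfy, modulo $\fp_1$, eight Koszul-type relations $X_i\overline{\Delta}_{1,j}=X_{i'}\overline{\Delta}_{1,j'}$ — each obtained by reducing $X_i\Delta_{1,j}$ with a single $\delta_{kl}$, e.g.\ $X_4\Delta_{1,2}-X_2\Delta_{1,4}=X_0^{a+d}(X_2^2-X_0X_4)\in\fp_1$ — which span $G_1$ in degrees $q(q+2d+3)+kd$; analogous computations give the six second syzygies spanning $G_2$ in degrees $q(q+2d+5)+kd$ and the single third syzygy spanning $G_3$ in degree $q(q+2d+9)+9d=c$. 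Assembling the cone, the $\mathcal E$-part supplies the $\delta$-type blocks of $F_2,F_3$ (in particular the $6q+kd$ shifts of $E_2$) and the $\mathcal G$-part the $\Delta$-type blocks, reproducing exactly the resolution of the statement. One then checks acyclicity (Buchsbaum--Eisenbud for $\mathcal G$, after which the cone is automatic), minimality (all shifts strictly increase), self-duality of $F_2$ under $\operatorname{Hom}_R(-,R)(-c)$, and, as an independent control, that $\sum_i(-1)^i\sum_j\beta_{ij}t^j$ equals $\bigl(\sum_{j\in\Gamma}t^j\bigr)\prod_{i=0}^{4}(1-t^{m_i})$.

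The main obstacle is exactly this last step: everything except $F_2$ is forced by the generator degrees and the Gorenstein symmetry, so the genuine work is to resolve the auxiliary ideal $\overline{\mathcal Q}\subset R/\fp_1$ over $R$ all the way up — the second and third syzygies and the acyclicity of $\mathcal G$ — together with the routine but lengthy bookkeeping confirming that the shifts come out as stated and are independent of the admissible choices of $a\ge 1$ and $d\ge 1$ with $\gcd(4a+2,d)=1$.
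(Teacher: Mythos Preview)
Your outline is correct, but your route to $F_2$ is genuinely different from the paper's and, as you yourself note, heavier. The paper never resolves $\overline{\mathcal Q}=\fp/\fp_1$ at all. Instead it exploits \emph{both} determinantal structures in Theorem~\ref{mingen}: the eight $\delta$-type second-syzygy shifts come from the Eagon--Northcott complex of $I_2(A)$, exactly as in your $E_2$, but six of the eight $\Delta$-type shifts are read off from the Eagon--Northcott complex of $I_2(B)$ --- namely the six first syzygies among the maximal minors of $B$ that involve the first column of $B$ (these are syzygies of $\fp$ because the remaining minors of $B$ already lie in $I_2(A)$). That leaves only two of the sixteen middle shifts undetermined, and the Gorenstein self-duality $F_2\cong F_2^{*}(-c)$ then forces their values; the paper recovers $c$ and the full $F_3$ from the same symmetry rather than through the Frobenius number.

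So the trade-off is this: the paper's argument is shorter because the second matrix $B$ hands you six $\Delta$-type relations for free, reducing the ``main obstacle'' you identify to a two-shift bookkeeping exercise with the palindromic symmetry. Your mapping-cone approach uses only $\fp_1=I_2(A)$ and the three $\Delta$'s, ignoring the $I_2(B)$ structure entirely; it is more systematic and, if carried out, yields explicit differentials rather than just shifts, but it requires building and verifying the full length-three resolution $\mathcal G$ of $\overline{\mathcal Q}$ and checking Buchsbaum--Eisenbud acyclicity --- work the paper sidesteps. Your independent computation of $c$ via $F(\Gamma)+\sum m_i$ is a nice check the paper does not make explicit.
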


\begin {proof}
We know that $\fp$ is Gorenstein of height 4 and by
Theorem~\ref{mingen}, it is generated by the $2\times 2$ minors of
$A$ and $B$, where
$$A = {\left(\begin{array}{cccc}
\begin{array}{c}
X_{0}\\[1.5mm]
X_{1}\\[1.5mm]
\end{array} &
\begin{array}{c}
X_{1}\\[1.5mm]
X_{2}\\[1.5mm]
\end{array} &
\begin{array}{c}
X_{2}\\[1.5mm]
X{3}\\[1.5mm]
\end{array} &
\begin{array}{c}
X_{3}\\[1.5mm]
X_{4}\\[1.5mm]
\end{array}
\end{array}\right)}$$ and
$$B={\left(\begin{array}{cccc}
\begin{array}{c}
X_{4}^a\\[1.5mm]
X_{0}^{a+d}\\[1.5mm]
\end{array} &
\begin{array}{c}
X_{0}\\[1.5mm]
X_{2}\\[1.5mm]
\end{array} &
\begin{array}{c}
X_{1}\\[1.5mm]
X_{3}\\[1.5mm]
\end{array} &
\begin{array}{c}
X_{2}\\[1.5mm]
X_{4}\\[1.5mm]
\end{array}
\end{array}\right)}.$$
Moreover, the $9$ minimal generators of $\fp$ are the six $2\times
2$ minors of $A$ whose degrees are $2m_0+kd$, $k=2,3,4,4,5,6$ for
$m_0:=4a+2=2(2a+1)=2q$, and the $3$ minors of $B$ involving the
first column which are of degrees $m_0(a+d+1)+kd$, $k=0,1,2$. Note
that for both determinantal ideals $I_2(A)$ and $I_2(B)$, the
Eagon-Northcott complex provides a minimal resolution. The $8$
determinantal relations from the Eagon-Northcott resolution of
$I_2(A)$ and the $6$ determinantal relations from the
Eagon-Northcott resolution of $I_2(B)$ involving the first column
will necessarily be among the 16 minimal syzygies of $\fp$. The
degrees of the other two relations are determined by the symmetry of
the resolution. Thus the remaining two relations must be of degrees
$q(q+3)+(2q+2)d$ and $q(q+3)+(2q+4)d$. This determines the rest of
the degrees in the resolution.
\end{proof}

\medskip

We note that when $ d=1$, this ideal can never be determinantal even
though it is height 4 and generated by 9 elements.  That is, this
ideal is not the ideal of $2\times 2$ minors of a $3\times 3$ matrix
for sum of the degrees of such an ideal must be even.    For in this
case, the ideal is generated by 9 elements whose degrees add  up to
$27d+ km_0$.  Since $m_0$ is even, this number is odd whenever $d$
is odd. Hence if $d$ is odd, it cannot be a determinantal ideal.
However, it is not clear if it can be determinantal for even values
of $d$.  Of course, the conjecture only says that the Betti numbers
are determined by $b$.

\bibliographystyle{amsalpha}

\end{document}